\newtheorem{thm}{Theorem}
\newtheorem{prop}{Proposition}
\DeclareMathOperator{\dist}{dist}
\title[Curvature estimates]{Curvature estimates of spacelike surfaces in de Sitter space.}
\author{Daniel Ballesteros-Ch\'avez}
\address{Daniel Ballesteros-Ch\'avez\\ 
Dep. Applied Mathematics \\
Silesian University of Technology\\
           Gliwice, 44-100\\
          Poland.}
\email{daniel.ballesteros-chavez@polsl.pl}
\thanks{The author was supported by CONACYT-Doctoral scholarship no.~411485}
\date{}
\subjclass[2010]{35J60, 53C50, 53C42}
\begin{document}
\maketitle

%\tableofcontents

\begin{abstract}
 Local estimates of the maximal curvatures of admissible spacelike hypersurfaces in de Sitter space
 for k-symmetric curvature functions are obtained. They depend on interior and boundary data.
\end{abstract}

\section{Introduction}
In this work we will consider solutions to fully nonlinear PDEs  of the form
\begin{equation}
  \label{eq:1}
  F(A) = f(\lambda_1,\ldots,\lambda_n) = \psi, \mbox{ in } \Omega\subset\mathbb{S}^n,
\end{equation}
where $A$ is the second fundamental form of a spacelike hypersurface in de Sitter space $S^{n+1}_1$.
Furthermore $f$ is a symmetric function of the eigenvalues of $A$, and $\psi$ is a function of the
position vector and the tilt of the hypersurface to be defined below.
We will assume that the hypersurface is the graph of a function over an open set of the sphere.
More precisely, let $\Omega \subset \mathbb{S}^n$ be an open set and $u : \Omega \to I$ a
smooth function, where $I=[R_1,R_2]$ is the real interval $0<R_1<R_2$, such that the graph
\begin{equation}
  \label{eq:2}
  \Sigma = \mbox{graph}(u) = \left\{ Y= ( u( \xi ),\xi ) \, | \,   \xi\in\Omega \right\} \subset S^{n+1}_1
\end{equation}
is a spacelike hypersurface in de Sitter space $S^{n+1}_1$.

For $1\leq k \leq n$ and $\lambda = (\lambda_1,\ldots,\lambda_n)\in\mathbb{R}^n$, let  $S_{k}(\lambda) =  \Sigma_{1\leq i_1<\cdots<i_k\leq n}\lambda_{i_1}\cdots\lambda_{i_k} $, and
define the normalised symmetric polynomial $H_k(\lambda) = {n\choose k}^{-1}S_{k}$. In this paper we
consider the case when $f$ is the homogeneous function of degree one given by
\begin{equation}
  \label{eq:3}
  f(\lambda) = H_{k}^{1/k}(\lambda) ,
\end{equation}
defined in an open convex cone $\Gamma$ which is symmetric, with vertex at the origin
and contains the positive cone
$\Gamma^{+}=\{\lambda\in\mathbb{R}^n\,|\, \lambda_i>0, \forall i = 1,2,\dots,n\}$.

Since $f \in C^2(\Gamma)\cap C^{0}(\bar{\Gamma})$,  $f_{\lambda_i} >0 $ for all $i = 1,2,\ldots,n$, and $f(\lambda)$ is
concave in $\Gamma$, it follows that $F$ is elliptic and concave.
A solution
$u$ will be called \emph{admissible} if the principal curvatures $\lambda = (\lambda_1,\ldots,\lambda_n)$ of the spacelike hypersurface $\Sigma$
given by \eqref{eq:2} belong to the connected component of $\Gamma_k$ containing
$\Gamma^+$, where $\Gamma_{k}:= \{\lambda\in\mathbb{R}^{n} \,|\, H_{k}(\lambda)>0\}$.

The existence of solutions of such equations has been studied 
in \cite{CNS3} by L. Caffarelli, L. Nirenberg and J. Spruck.
In \cite{CNS4}, they proved the existence of starshaped hypersurfaces in Euclidean space
with prescribed $k$-symmetric curvature using the a priori $C^{2,\alpha}$ estimate needed to carry out
the continuity method. By the Evans-Krylov theorem it is sufficient
obtain the apriori $C^{0}, C^{1}$ and $C^2$ estimates for admissible solutions, where
the last one follows from an estimate of the maximal principal curvature of the hypersurface.

For various ambient Riemannian manifolds, curvature estimates for
starshaped hypersurfaces with given $k$-symmetric curvature have also
been proved. Namely for hypersurfaces in the sphere, the lower order and
the curvature estimate are given in \cite{Oliker1} by M. Barbosa,
L. Herbert and V. Oliker. These were used for the existence result by
Y. Li and V. Oliker in \cite{YYLi:elli}. The curvature estimate and the
existence result for hypersurfaces in the hyperbolic space was
proved by Q. Jin and Y. Li in \cite{YYLi:hyp} using similar
arguments of W. Sheng, J. Urbas and X. Wang in \cite{sheng2004}. The
lower order estimates for this case are also contained in
\cite{Oliker1} and used to complete the existence result. For
spacelike hypersurfaces in
Minkowski space and Lorentz manifolds
various results have been proved by  R. Bartnik and L. Simons
\cite{bartnik1982}, C. Gerhardt \cite{CG:Lorentz, Gerhardt1983, Gerhardt1997},
Y. Huang \cite{Huang:01} and the references provided in them.

We obtain similar curvature estimates as in \cite{Huang:01}
in de Sitter space. As in \cite{Huang:01} we impose a growth assumption on the right hand side in terms of the tilt
$\tau$  (see \eqref{eq:39}). We introduce in Section \ref{sec:geom-form-hypers}
the geometric formulae of hypersurfaces in Lorentzian Manifolds,
and provide explicit expressions for hypersurfaces in de Sitter  space. In Section \ref{sec:proof-theorem-1}
we prove the following
\begin{thm}
  \label{thm:01}
  Let $\Omega\subset\mathbb{S}^n$ be a domain in the round sphere, and let $u\in C^{4}(\Omega) \cap C^2(\bar{\Omega})$ an admissible solution of
  the boundary value problem
  \begin{equation*}
    \label{eq:t01} %%Edit change label
    \left\{
      \begin{array}{rcll}
        F(A) =  H_{k}^{\frac{1}{k}}(\lambda(A)) &=& \psi(Y,\tau) & \mbox{in}\quad \Omega\\
        u  &=& \varphi & \mbox{on}\quad \partial\Omega
      \end{array}
    \right.,
  \end{equation*}
  where $A$ is the second fundamental form of a spacelike surface $\Sigma$ in de Sitter space given by
  (\ref{eq:25}), $\psi\in C^{\infty}(\bar{\Omega})$, $\psi>0$ and convex in $\tau$. Assume additionally that 
  \begin{equation*}
    \label{eq:45}
    \psi_{\tau}(X,\tau)\tau - \psi(X,\tau) \geq 0,   %% Edit this grater than zero
  \end{equation*}
  for all $X\in S^{n+1}_1$ and $\tau \in [1,\infty)$. Then  %% Edit: for all X in de sitter and tau grater than one.
  \begin{equation*}
    \label{eq:46}
    \sup_{\Omega}|A| \leq C,
  \end{equation*}
  where $C$ depends on $n$, $\|\varphi\|_{C^1(\bar{\Omega})}$, $\|\psi\|_{C^2(I\times\Omega\times [1,\infty))}$ and $\sup_{\partial\Omega}|A|$. %% Edit: remove extra and.
\end{thm}

And finally in Section \ref{sec:proof-theorem-2} we
give an interior estimate when the growth condition
is strict and the boundary data is spacelike and
affine. % change a comma and boundary.

\begin{thm}
  \label{thm:02}
      Let $\Omega\subset\mathbb{S}^n$ be a domain in the round sphere, and let $u\in C^{4}(\Omega) \cap C^2(\bar{\Omega})$ an admissible solution of
                 the boundary value problem
                 \begin{equation*}
                   \label{eq:51}
                   \left\{
                     \begin{array}{rcll}
                      F(A) =  H_{k}^{\frac{1}{k}}(\lambda(A)) &=& \psi(Y,\tau) & \mbox{in}\quad \Omega\\
                       u  &=& \varphi & \mbox{on}\quad \partial\Omega
                     \end{array}
                     \right.,
                   \end{equation*}
                   where $A$ is the second fundamental form of a spacelike surface $\Sigma$ in de Sitter space given by
                   (\ref{eq:25}), $\psi\in C^{\infty}(\bar{\Omega})$, $\psi>0$ and convex in $\tau$. Assume also that 
                   \begin{equation*}
                     \label{eq:52}
                     \psi_{\tau}(X,\tau)\tau - \psi(X,\tau) > 0,   %% Edit this grater than zero
                   \end{equation*}
                   for all $X\in S^{n+1}_1$ and $\tau \in [1,\infty)$,  %% Edit this grater than zero
                   and that the domain $\Omega$ is
                   $C^2$, uniformly convex. If the boundary value
                   $\varphi$ is spacelike and affine, namely $\varphi$ is the restriction of an affine function
                   on ambient Minkowski space of $n+2$ dimension. Then for any
                   $\Omega'\subset\subset\Omega$, there is a constant
                   $C$ depending only on $n, \Omega$,
                   $\dist(\Omega',\partial\Omega)$, $\|\varphi\|_{C^1(\bar{\Omega})}$ and $\|\psi\|_{C^2(I\times \Omega\times [1,\infty))}$, such that %Edit
                   \begin{equation*}
                     \label{eq:53}
                     \sup_{\Omega'}|A| \leq C.
                   \end{equation*}
                 \end{thm}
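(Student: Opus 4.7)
The plan is to localize the argument of Theorem~\ref{thm:01} by inserting a multiplicative cutoff that vanishes on $\partial\Omega$, and to use the strict growth $\psi_{\tau}\tau - \psi > \psi$ to produce a positive gap that absorbs the terms generated by differentiating the cutoff. Since $\varphi$ is spacelike and affine, I would first extend it by the same affine formula to a smooth function $\bar\varphi$ on $\overline{\Omega}$ whose graph is spacelike in $S^{n+1}_1$. Using admissibility of $u$, uniform convexity of $\Omega$, and a comparison principle with $\bar\varphi$, I arrange (possibly after a sign flip) that $w := \pm(u-\bar\varphi) \ge 0$ in $\Omega$, $w = 0$ on $\partial\Omega$, and $w \ge c(\Omega') > 0$ on any $\Omega' \subset\subset \Omega$.

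The next step is to apply the maximum principle to a test function of the form
\[
W(x,\xi) \;=\; \log \kappa(x,\xi) \;+\; g(\tau(x)) \;+\; \beta \log w(x),
\]
on the unit tangent bundle of $\Sigma$, where $\kappa(x,\xi) = A(\xi,\xi)$, $g$ is the same auxiliary function used in the global estimate of Theorem~\ref{thm:01}, and $\beta > 0$ is a parameter to be tuned. At an interior maximum $x_0$ with maximising direction $e_1$, I rotate frames so that $A(x_0)$ is diagonal with $\lambda_1 = \lambda_{\max}$ along $e_1$. Combining $\nabla W(x_0) = 0$ and $F^{ij}\nabla_i\nabla_j W(x_0) \le 0$ with the twice-differentiated equation and the Codazzi--Gauss identities for spacelike hypersurfaces in de Sitter space should yield a schematic inequality
\[
\lambda_1\bigl(\psi_{\tau}\tau - 2\psi\bigr) \;\le\; C_1\Bigl(1 + \sum_i F^{ii}\Bigr) \;+\; \beta F^{ij}\frac{\nabla_i\nabla_j w}{w} \;-\; \beta F^{ij}\frac{\nabla_i w\,\nabla_j w}{w^2},
\]
in which the coefficient on the left is strictly positive by hypothesis.

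To close the argument, the Hessian term $F^{ij}\nabla_i\nabla_j w$ is controlled because $\bar\varphi$ is affine, so it reduces to a combination of $F^{ij} h_{ij} = \psi$ (via the Weingarten relation) and lower-order quantities depending on $\|\varphi\|_{C^1}$, $\Omega$, and $\sum_i F^{ii}$. The first-order condition $\nabla W(x_0) = 0$ then allows $\beta\,\nabla w/w$ to be substituted in terms of $\nabla\log\lambda_1$ and $g'(\tau)\nabla\tau$; Cauchy--Schwarz together with the convexity properties of $g$ absorbs the quadratic gradient term into the trace terms. Choosing $\beta$ small enough, the strictly positive factor on the left dominates, producing $w(x_0)^{\beta}\lambda_1(x_0) \le C$; combined with $w \ge c(\Omega')$ on $\Omega'$ and the admissibility condition bounding the other principal curvatures by $\lambda_1$, this yields $\sup_{\Omega'}|A| \le C$.

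The hard part will be the bad cutoff contribution $\beta F^{ij}\nabla_i w\,\nabla_j w/w^2$, which is quadratic in $\beta/w$ and, without the strict inequality, would not be absorbable since the global estimate of Theorem~\ref{thm:01} already exhausts the slack of the non-strict assumption $\psi_{\tau}\tau - \psi \ge \psi$. The extra margin $\psi_{\tau}\tau - 2\psi > 0$ supplied by strictness is precisely what makes the maximum-principle balance close, and verifying its compatibility with the explicit $g(\tau)$ inherited from Theorem~\ref{thm:01}, as well as handling the cross terms $F^{ij}\nabla_i w\,\nabla_j\tau$ and $F^{ij}\nabla_i w\,\nabla_j\lambda_1$, is the delicate bookkeeping of the argument.
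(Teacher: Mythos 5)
Your overall framework is the right one and matches the paper: the test function you propose, $W = \log\kappa + g(\tau) + \beta\log w$ with $w = \varphi - u$ and $g(\tau)$ the same auxiliary function as in Theorem~\ref{thm:01}, is exactly the paper's $\Phi = \ln A_{11} + \alpha(\tau) + \beta\ln\gamma$, and the strict growth hypothesis does indeed supply the positive coefficient on $\lambda_1$ that drives the conclusion. But there is a genuine gap in the middle of your argument, and it is not ``delicate bookkeeping'': it is the essential technical mechanism.

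The term that kills a naive Cauchy--Schwarz closure is not the cutoff term $\beta F^{jj}(\nabla_j w/w)^2$; it is $-F^{jj}(\nabla_j A_{11}/A_{11})^2$, which comes from differentiating $\log\lambda_1$ twice. After substituting the first-order condition $\nabla_j\log A_{11} = -\alpha'\nabla_j\tau - \beta\nabla_j w/w$, one produces $-(1+\varepsilon)(\alpha')^2 F^{jj}(\nabla_j\tau)^2$, and since $\nabla_j\tau = -A_{jj}\nabla_j\eta$ this converts into $-C F^{jj}A_{jj}^2$. This can only be absorbed into the positive term $(1+\alpha'\tau)F^{jj}A_{jj}^2$ when the latter is genuinely coercive, which requires $F^{jj}A_{jj}^2$ to be comparable to $\lambda_1^2\sum F^{jj}$ --- true if $\lambda_n \le -\mu\lambda_1$, but false when all eigenvalues except $\lambda_1$ are small. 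The paper handles this by splitting into two cases on the size of $\lambda_n$: in Case 1 ($\lambda_n \le -\mu\lambda_1$) your argument works; in Case 2 ($\lambda_n \ge -\mu\lambda_1$) the concavity remainder $-\frac{1}{\lambda_1}F^{ij,kl}\nabla_1 A_{ij}\nabla_1 A_{kl}$, which you propose to drop, must instead be retained and exploited via Andrews' formula $F^{ij,kl}\eta_{ij}\eta_{kl} = f_{ij}\eta_{ii}\eta_{jj} + \sum_{i\neq j}\frac{f_i-f_j}{\lambda_i-\lambda_j}\eta_{ij}^2$, together with the index partition $I = \{j : F^{jj}\le 4F^{11}\}$, $J = \{j : F^{jj} > 4F^{11}\}$ and the Codazzi identity $\nabla_1 A_{1j} = \nabla_j A_{11}$. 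Only for $j\in J$ does the inequality $(1-\delta)f_j\lambda_1 \ge 2f_1\lambda_1 - (1+\delta)f_j\lambda_j$ hold (with $\delta = 1/4$, $\mu = 1/5$), and it is precisely this that dominates $(1+C_0\beta^{-1})\sum_{j\in J}F^{jj}(\nabla_j A_{11}/A_{11})^2$. Without this two-case dichotomy and the use of the second-derivative concavity term there is no way to close the estimate, so as written your proposal does not constitute a proof.
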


\section{Geometric formulae for hypersurfaces in de Sitter space}
\label{sec:geom-form-hypers}

We will recall some geometric formulae for hypersurfaces in Lorentzian manifolds and at the
end we will apply them to the case of spacelike hypersurfaces in de Sitter space.

Let $\{\partial_1,...,\partial_n,N\}$ be a coordinate frame for a Lorentzian
manifold $(\bar{M},\bar{g})$ and $M$ a Lorentzian  (not necessarily spacelike) hypersurface with induced metric $g$ such that
$\{\partial_i\}$ span $TM$, and let $N$ be the unit normal field to $M$
and put $\epsilon = \bar{g}(N,N)$. When the induced metric is positive
definite, then we say that $M$ is a spacelike hypersurface. The metric $g$
can be represented by the matrix
$g_{ij} = g(\partial_i,\partial_j)$ with inverse denoted by $g^{ij}$.

The \textit{Gauss formula} for $X,Y\in T\Sigma$ reads
\begin{equation*}
  \label{eq:4}
  D_{X}Y = \nabla_{X}Y +\epsilon\, h(X,Y)N,
\end{equation*}
here $D$ is the connection on $\bar{M}$, $\nabla$ is the induced connection on $M$ and the \textit{second fundamental form} $h$ is 
the normal projection of $D$. In a coordinate basis we write
\begin{equation*}
  \label{eq:5}
h_{ij}= h(\partial_i,\partial_j).
\end{equation*}
The \textit{shape operator} is obtained by raising an index with the inverse of the metric
\begin{equation*}
  \label{eq:6}
  h^i_j = g^{ik}h_{kj}.
\end{equation*}
The \textit{principal curvatures} of the hypersurface $\Sigma$ are the
eigenvalues of the symmetric matrix $(h^i_j)$. The tangential projection of the
covariant derivative of the normal vector field $N$ on $\Sigma$, $\nabla_j N = (D_{\partial_j}N)^{\top}$,
is related to the second fundamental form by the \textit{Weingarten equation}
      \begin{equation}
        \label{eq:7}
        \nabla_{j} N = -h^i_j\partial_i = -g^{ik}h_{kj}\partial_i.
      \end{equation}
      The \emph{curvature tensor} is defined for $X,Y,Z \in T\Sigma$ as
      
      \begin{equation*}
        \label{eq:8}
        R(X,Y)Z = \nabla_{Y}\nabla_{X}Z - \nabla_{X}\nabla_{Y}Z + \nabla_{[X,Y]}Z.
      \end{equation*}
      The \emph{Christoffel symbols} are given by %% edit add are
        \begin{equation}
          \label{eq:9}
          \Gamma_{ij}^k = \frac{1}{2}g^{kl}\left(\partial_ig_{jl} + \partial_{j}g_{il} - \partial_lg_{ij}\right),
        \end{equation}
      and the curvature tensor in terms of Christoffel symbols is
      \begin{equation*}
        \label{eq:10}
        R_{ijk} = R^{m}_{ijk}\partial_m =
        \left(\partial_j\Gamma_{ik}^m -
          \partial_i\Gamma_{jk}^m + \Gamma^m_{js}\Gamma^s_{ik} - \Gamma^m_{is}\Gamma^s_{jk}\right)\partial_m.
      \end{equation*}
      Contracting with the metric
      \begin{equation*}
        \label{eq:11}
        R_{ijkl} = g\left(R(\partial_i,\partial_j)\partial_k,\partial_l\right) = g_{lm}R^{m}_{ijk}.
        \end{equation*}
      We can also write the  curvature tensor of the ambient manifold in terms of
      the curvature of the surface and the second fundamental form
      \begin{equation*}
        \label{eq:12}
        \begin{split}
          \bar{R}_{ijk} &= R^m_{ijk}\partial_m\\
          &=D_j(D_i\partial_k) - D_i(D_j\partial_k)\\
          &=(\nabla_j+D^{\perp}_j)(\nabla_i\partial_k + \epsilon h_{ik}N) - (\nabla_i+D^{\perp}_i)(\nabla_j\partial_k + \epsilon h_{jk}N) \\
          &=R_{ijk}+ \epsilon h_{ik}\nabla_jN - \epsilon h_{jk}\nabla_iN + \epsilon D^{\perp}_{j}(hN)_{ik} - \epsilon D^{\perp}_{i}(hN)_{jk},
          \end{split}
      \end{equation*}
      where $D^{\perp}_i(hN)_{jk}= D^{\perp}_{i}(h_{jk}N) -\Gamma_{ik}^rh_{rj}N -\Gamma_{ij}^rh_{rk}N$.
      
 From the last identity, when the ambient manifold is flat, we obtain the \textit{Codazzi equation}  given by the identity % Edit remove is
\begin{equation}
  \label{eq:13}
  \nabla_i h_{jk} = \nabla_j h_{ik}.
\end{equation}
Note that the first and second covariant derivatives of the second fundamental form are given by
\begin{equation*}
  \begin{split}
        \label{eq:14}
        \nabla_l h_{ij} &= \partial_l h_{ij} - \Gamma_{li}^rh_{rj} - \Gamma_{lj}^rh_{ir},\\
        \nabla_{k}\nabla_l h_{ij} &= \partial_k(\nabla_l h_{ij}) - \Gamma_{kl}^r\nabla_rh_{ij} - \Gamma_{ki}^r\nabla_{l}h_{rj}- \Gamma_{kj}^r\nabla_{l}h_{ir}.
      \end{split}
    \end{equation*}

\textit{The Gauss Equation} expressed in orthonormal coordinates, is given by
\begin{equation}
  \label{eq:15}
  \bar{R}_{ijkl} = R_{ijkl} - \epsilon \left(h_{ik}h_{jl} - h_{il}h_{jk} \right).
\end{equation}

When $M$ is a hypersurface of a flat manifold $\bar{R}_{lkij} = 0$,
the last equation simplifies to the identity
\begin{equation*}
  \label{eq:16}
  R_{ijkl} =\epsilon\left( h_{ik}\,h_{jl}- h_{jk}\,h_{il} \right).
\end{equation*}

Note that $A$ is a bilinear symmetric tensor, and the following \textit{Ricci identity} holds
      \begin{equation}
        \label{eq:17}
{\nabla_{k}}\nabla_{l}A_{ij} - {\nabla_{l}}\nabla_{k}A_{ij} =  R_{kljr}A_{ir} + R_{klir}A_{rj}.
      \end{equation}

Let $\mathbb{R}^{n+2}_{1}=(\mathbb{R}^{n+2},\bar{g})$ be the Minkowski space with metric $\bar{g} = -dx_1^2 + dx_2^2 + \cdots + dx_{n+2}^2$ and
covariant derivative $\bar{D}$.
Then \emph{de Sitter} space is defined as
${S}^{n+1}_{1} = \left\{ x\in\mathbb{R}^{n+2}_{1} : \bar{g}( x,x) = 1\right\}$ with the
induced Lorentzian metric which we will denote by $g$, and covariant derivative $D$. Moreover,
any point in ${S}^{n+1}_1$ can be written as
$(r,\xi)\in\mathbb{R}^{+}\times \mathbb{S}^n$, 
with the induced metric
\begin{equation*}
  \label{eq:18}
  g = -dr^2 + \cosh^2(r)\sigma,
\end{equation*}
where $\sigma$ is the round metric on $\mathbb{S}^n$, and later we will use
$\tilde{\nabla}$ to denote the covariant derivative for the metric $\sigma$.
The vector field $\partial_{r}$ will be written separately from any
other index notation $\partial_{\alpha},\partial_j,...$, etc., the latter indices taking values form $1$ to $n$.

Let $u:\mathbb{S}^{n}\to  [0,\infty)$ be a smooth function and consider a
spacelike hypersurface in ${S}^{n+1}_1$ given by the graph
$\Sigma=\{(u(\xi),\xi)\}$.
The tangent space of the hypersurface at a point $Y\in\Sigma$
is spanned by the tangent vectors
$Y_j = u_{j}\partial_{r} + \partial_{j}$, the covariant derivative
 $\nabla$ corresponding to the induced metric 
on $\Sigma$  which is given by %% Edit Add which
\begin{equation*}
  \label{eq:19}
  G_{ij} = -u_{i} u_{j} + \cosh^2(u)\sigma_{ij} .
\end{equation*}
Since the metric is positive definite, its inverse can be computed
  \begin{equation*}
    \label{eq:20}
    \begin{split}
      G^{ij}
    &=\cosh^{-2}(u) \sigma^{ij} +
    \frac{ \sigma^{i \gamma}u_{\gamma} \sigma^{j\eta}u_{\eta}}{\cosh^4(u) - \cosh^{2}(u)|\tilde{\nabla} u|^2},
    \end{split}
  \end{equation*}
where $\tilde{\nabla}u = \sigma^{ij}u_j\partial_i$ and $|\tilde{\nabla}u| := \sigma^{ij}u_iu_j$.  Note that for this to be well defined we need
  to have $|\tilde{\nabla} u|^2 \neq \cosh^2(u)$, and this is the case when the surface is spacelike.  A unit normal vector
  to $\Sigma$ at the point $Y$ can be obtained by solving the equation $g(Y_{\alpha},\hat{n})=0$, and
  then we get
  
\begin{equation*}
  \label{eq:21}
  \hat{n} =- \frac{ \cosh^2(u)\partial_{r}+ \tilde{\nabla} u }{
    \sqrt{\epsilon\left(-\cosh^4(u) + \cosh^2(u)|\tilde{\nabla} u|^2\right)}},
\end{equation*}
and moreover, since $\Sigma$ is spacelike, then the following inequality must hold
  \begin{equation}
    \label{eq:22}
    |\tilde{\nabla} u | \leq \cosh(u),
  \end{equation}
because the unit vector $\hat{n}$ normal to $\Sigma$ is time-like, that is
$g(\hat{n},\hat{n}) = -1$.

The second fundamental form is the projection of the second derivatives of the
parameterisation $D_{Y_{\alpha}}Y_{\beta}$ on the normal direction. Notice that
from \eqref{eq:9}, and writing $\tilde{\Gamma}$ for the Christoffel symbols
of the metric $\sigma$, we have
  \begin{equation*}
    \label{eq:23}
    D_{\partial_r}\partial_r = 0; \quad
    D_{\partial_r}{\partial_j} = \tanh(r)\partial_j;\quad
    D_{\partial_i}{\partial_j}= \cosh(r)\sinh(r)\sigma_{ij}\partial_{r}+ \tilde\Gamma_{ij}^k \partial_k ,
  \end{equation*}
and using these identities we compute
  \begin{equation*}
    \label{eq:24}
    \begin{split}
    D_{Y_i}Y_{j}& = D_{u_i\partial_{r}+\partial_i}\left(u_j\partial_{r}+\partial_j\right)\\
    &=
    u_ju_jD_{\partial_{r}}\partial_{r} +
    u_iD_{\partial_{r}}\partial_{j} +
    u_{ij} \partial_{r} +
    u_{j}D_{\partial_{i}}\partial_{r} +
    D_{\partial_{i}}\partial_{j}.
    \end{split}
  \end{equation*}
Let $W^2 = \cosh^4(u) - \cosh^2(u)|\tilde{\nabla} u|^2$, then  $ A_{ij}=  g(D_{Y_{i}}Y_{j}, \hat{n})$ is given explicitly by
 \begin{equation}
  \label{eq:25}
  A_{ij} = \frac{\cosh^2(u)}{W}
  \left(\tilde{\nabla}^2_{ij}u - 2\frac{\sinh(u)}{\cosh(u)}u_{i}u_{j} + \sinh(u)\cosh(u)\sigma_{ij}\right).
\end{equation}

Recalling that the Minkowski space is a flat Lorentzian
manifold, and letting $h$ denote the second fundamental form
of de Sitter space $S^{n+1}_1$, when we apply the Gauss equation \eqref{eq:15}
to the surface as a submanifold of
codimension two $\Sigma\subset {S}^{n+1}_1\subset\mathbb{R}^{n+1,1}$, we have
\begin{equation}
  \label{eq:26}
  \begin{split}
    0= \bar{\bar{R}}_{ijkl} &=  \bar{R}_{ijkl} -\epsilon_1 (h_{ik}h_{jl} - h_{il}h_{jk})\\
    &= R_{ijkl} - \epsilon_2 (A_{ik}A_{jl} - A_{il}A_{jk}) - \epsilon_1(h_{ik}h_{jl} - h_{il}h_{jk}).
  \end{split}
\end{equation}

%% --- Add Gauss equation and derivative notation D^x
The Gauss formula applied twice reads
  \begin{equation}
  \label{eq:Gausscod2}
  D_{Y_i} Y_j = \nabla_{Y_{i}}Y_{j} - A_{ij}\hat{n} - \langle Y_i,Y_j \rangle Y.
\end{equation}

For any function $f :S^{n+1}_1 \times \mathbb{R} \to \mathbb{R}$, the partial derivative on $S^{n+1}_{1}$ and $\Sigma$ are
defined respectively as
\begin{equation}
D^{x}f = \overline{g}^{\alpha\beta} \frac{\partial f}{\partial x_{\alpha}} \partial_{\beta}, \mbox{and} \quad \nabla^{x}f = (D^xf)^{\top}.
\end{equation}
%%----

Finally let us remark that at a given point of $\Sigma$ we can use coordinates such that
the second fundamental form $\{A_{ij}\}$ is diagonal, thus $\lambda_i = A_{ii}$ at the point,
and through the paper we assume $\lambda_1\geq\cdots\geq\lambda_n$, and we may also assume that $\lambda_1\geq 1$. The fact that $A$ is diagonal at a point also implies that
$F^{ij}:=\frac{\partial F}{\partial A_{ij}}$ is also %Edit add inline definition of F^{ij}
diagonal and we can also write $F^{ii}= f_{i}$.

\section{Proof of Theorem 1}
\label{sec:proof-theorem-1}

We are now going to prove that if $u$ is an admissible solution of
\eqref{eq:t01} then the curvature of the hypersurface is bounded, then
the $C^2$ % Edit Redaction of long sentence
estimate of the solution will be a consequence of the equation of the
second fundamental form (\ref{eq:25}) and lower order estimates. We
will need % Edit Redaction of long sentence
the commutator formula for second order derivatives of the second
fundamental form, given by Ricci's identity~(\ref{eq:17}), together
with the Gauss equation of the surface as a codimension $2$
spacelike submanifold of the Minkowski space. With this in account and together with
equation (\ref{eq:26}) we obtain the following
      \begin{equation}
        \label{eq:27}
           R_{ijkl} = - (A_{ik}A_{jl} - A_{il}A_{jk}) + (h_{ik}h_{jl} - h_{il}h_{jk}),
      \end{equation}
where we are using $A_{ij}$ for the second fundamental form of the spacelike %Edit change as by for
      hypersurface in de Sitter space, and $h_{ij}$ denotes the second fundamental form
      of de Sitter space in flat Minkowski space.
      Substituting in equation (\ref{eq:17}) we get
         \begin{equation*}
           \label{eq:28}
           \begin{split}
          \nabla_{k}\nabla_{l} A_{ij} &= \nabla_{l}\nabla_{k} A_{ij} + \sum_{r}R_{kljr}A_{ir}+ \sum_{r} R_{klir}A_{rj} \\
             &=\nabla_{l}\nabla_{k} A_{ij} + \sum_r\left\{- (A_{kj}A_{lr} - A_{kr}A_{lj}) + (h_{kj}h_{lr} - h_{kr}h_{lj})\right\}A_{ir}\\
             &\qquad+\sum_r\left\{- (A_{ki}A_{lr} - A_{kr}A_{li}) + (h_{ki}h_{lr} - h_{kr}h_{li})\right\}A_{rj}.\\
             \end{split}
      \end{equation*}

Moreover, notice that by the Codazzi equation, the Ricci identity (\ref{eq:17}) and summing over $r$ we get
           \begin{equation*}
             \label{eq:29}
             \begin{split}
               \nabla_{i}\nabla_{j}A_{kk} =& \nabla_{i}\nabla_{k}A_{kj} \\
               =&  \nabla_{k}\nabla_{i} A_{kj} + R_{ikkr}A_{rj} + R_{ikjr}A_{kr}\\
               =& \nabla_{k}\nabla_{k} A_{ij} + R_{ikkr}A_{rj} + R_{ikjr}A_{kr}.
               % =& \nabla_{k}\nabla_{k} A_{ij} +  R_{ikkr}A_{rj} + R_{ikjr}A_{kr}.
               \end{split}
           \end{equation*}
           Using coordinates such that  $A$ is diagonal, form equation (\ref{eq:27}) we obtain
           \begin{multline}
             \label{eq:30}
             \nabla_{j}\nabla_{j} A_{kk} = \nabla_{k}\nabla_{k}A_{jj} + A_{kk}A_{jj}^2 + h_{jk}h_{jk}A_{jj} - h_{kk}h_{jj}A_{jj}\\
             - A_{jj}A_{kk}^2 + h_{jj}h_{kk}A_{kk} - h_{jk}h_{jk}A_{kk}.
           \end{multline}

      The first covariant derivative of (\ref{eq:1}) is given by
      \begin{equation*}
        \label{eq:31}
        F^{ij}\nabla_{k}A_{ij}= \nabla_k \psi,
      \end{equation*}
and the second covariant derivative 
      \begin{equation}
        \label{eq:32}
        F^{ij}\nabla_{k}\nabla_{k}A_{ij} + F^{ij,ml}\nabla_kA_{ij}\nabla_kA_{ml} = \nabla_{k} \nabla_k \psi.
      \end{equation}

           By multiplication of $F^{jj}$ with  \eqref{eq:30}, using coordinates such that $h_{ij} = -\delta_{ij}$ and adding repeated indices  % Edit
           \begin{multline}
\label{eq:33}
             F^{jj}\nabla_{j}\nabla_{j}A_{kk}=
            F^{jj} \nabla_{k}\nabla_{k}A_{jj} +  A_{kk}F^{jj}A_{jj}^2  - F^{jj}A_{jj}\\
             - F^{jj}A_{jj}A_{kk}^2 + A_{kk}\sum_j F^{jj}.
           \end{multline}

           Let $H = \sum_k A_{kk}$, we will use the identities above to
           compute $F^{jj}\nabla_{j}\nabla_{j}H$ that will be used later. From \eqref{eq:33} we have
           \begin{multline*}%             \label{eq:34}
             F^{jj}\nabla_{j}\nabla_{j}H=
             F^{jj} \sum_{k}\nabla_{k}\nabla_{k} A_{jj} +  HF^{jj}A_{jj}^2\\
             - nF^{jj}A_{jj}
             - F^{jj}A_{jj}\sum_{k} A_{kk}^2 + H\sum_j F^{jj}.
           \end{multline*}
Since $H_{k}^{1/k}$ is homogeneous of degree $1$, it holds that $F^{jj}A_{jj} = \psi$, and then
           \begin{multline*}
%             \label{eq:35}
             F^{jj}\nabla_{j}\nabla_{j}H =
             \sum_{k}F^{jj} \nabla_{k}\nabla_{k} A_{jj}  % EDIT remove line break for equation.
              +  H\left(F^{jj}A_{jj}^2 + \sum_j F^{jj}\right) - \psi\left( n+ \sum_{j} A_{jj}^2\right).
          \end{multline*}

          Using equation (\ref{eq:32}) we can rewrite the first term of the right hand side above % Edit right hand side
          and we get
          \begin{multline}
            \label{eq:36}
     F^{jj}\nabla_{j}\nabla_{j}H=
     -\sum_{k}F^{ij,lm}\nabla_{k}A_{ij}\nabla_{k}A_{lm} + \sum_{k}\nabla_{k} \nabla_k \psi\\
     +  H\left(F^{jj}A_{jj}^2 + \sum_j F^{jj}\right) - \psi\left( n+ \sum_{j} A_{jj}^2\right).
          \end{multline}

          Now we consider the following parameterisation of the hypersurface
           \begin{equation*}
             \label{eq:37}
             Y = \sinh(u(\xi))E_1 + \cosh(u(\xi)) \xi, \quad \xi\in\mathbb{S}^n,
           \end{equation*}
           where $E_1 = (1,0,...,0)\in\mathbb{R}^{n+1,1}$. The tangent space to $\Sigma$ is spanned by the vectors
           $Y_i = u_i\left(\cosh(u)E_1 + \sinh(u)\xi\right) + \cosh(u)\xi_i = u_i \partial_r + \partial_i$. We will write
           $Y_i = \nabla_i$ and $u_i = \partial_i u = \cosh(u)\xi_i u$.

Note that
           \begin{equation*}
             \label{eq:38}
             \cosh(u)\partial_r = %\cosh(u) \left(\cosh(u)E_1 + \sinh(u)\xi\right) = \cosh(u)^2E_1 + \cosh(u)\sinh(u)\xi =
             E_1 + \sinh(u)Y.
           \end{equation*}

           The tilt and the height functions are given respectively by
           \begin{equation}
             \label{eq:39}
             \tau = \langle \hat{n} , E_1 \rangle = \frac{ \cosh^2(u)}{\sqrt{\cosh^2(u) - |\tilde{\nabla} u|^2}} \,  ;\qquad
             \eta = \langle Y, E_1\rangle= - \sinh(u),
           \end{equation}
and 
           \begin{equation*}
             \label{eq:40}
             \exp[\Phi (u,\xi)] = \frac{A_{11}}{g_{11}}\exp[\alpha(\tau)-\beta\eta].
           \end{equation*}
           \begin{prop}
             \label{prop:1}
             
             For $\tau$ and $\eta$ defined as above, the following hold:
             \begin{enumerate}
             \item\label{p:1}{$\nabla_{ij}\eta = -\tau A_{ij} - \eta g_{ij}$.}
             \item\label{p:2}{$\nabla_j \tau = -g^{ik}A_{kj}\nabla_i\eta$.}
             \item\label{p:3}{$\nabla_j\nabla_i \tau = - g^{mn}\nabla_{n}A_{ij}\nabla_{m}\eta + \tau  A_{mj}g^{mn}A_{ni} +  A_{ij} \eta$.}
             \end{enumerate}
             \end{prop}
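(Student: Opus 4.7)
My plan is to prove all three identities by working in the ambient Minkowski space $\mathbb{R}^{n+2}_1$, where $\eta$ and $\tau$ are simply the $E_1$-components of the position vector $Y$ and the unit normal $\hat{n}$ respectively. Since $E_1$ is a constant vector and the ambient connection $D$ is flat, one has the elementary identity $\nabla_i \eta = \langle Y_i, E_1 \rangle$, and similarly $D_{Y_j}\hat n$ produces $\nabla_j\tau$ after pairing with $E_1$. This reduces everything to plugging in the appropriate Gauss and Weingarten formulas for the nested inclusion $\Sigma \subset S^{n+1}_1 \subset \mathbb{R}^{n+2}_1$.

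For part (\ref{p:1}), I will expand $\nabla_{ij}\eta = Y_i(Y_j \eta) - (\nabla_{Y_i}Y_j)\eta = \langle D_{Y_i}Y_j, E_1 \rangle - \langle \nabla_{Y_i}Y_j, E_1 \rangle$, and then iterate the Gauss formula. For $S^{n+1}_1 \subset \mathbb{R}^{n+2}_1$ the unit normal is the position $Y$ itself with $\epsilon_1 = +1$, and since $D_X Y = X$ a short calculation shows the second fundamental form of de Sitter in Minkowski is $-g$. For $\Sigma \subset S^{n+1}_1$ we have $\epsilon_2 = -1$ because $\hat n$ is timelike. Composing gives $D_{Y_i}Y_j = \nabla_{Y_i}Y_j - A_{ij}\hat n - g_{ij}Y$, and pairing with $E_1$ yields exactly $\nabla_{ij}\eta = -\tau A_{ij} - \eta g_{ij}$.

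For part (\ref{p:2}), I compute $\nabla_j \tau = \langle D_{Y_j}\hat n, E_1\rangle$; since $\hat n$ is already tangent to $S^{n+1}_1$ and $\langle \hat n, Y_k\rangle = 0$ on $\Sigma$, differentiating the latter and using the definition $A_{jk} = g(D_{Y_j}Y_k,\hat n)$ identifies the tangential components $\langle D_{Y_j}\hat n, Y_k\rangle = -A_{jk}$. This gives the Weingarten expression $D_{Y_j}\hat n = -g^{ik}A_{kj}Y_i$, and pairing with $E_1$ reproduces (\ref{p:2}) because $\langle Y_i, E_1\rangle = \nabla_i \eta$.

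For part (\ref{p:3}), I differentiate (\ref{p:2}) covariantly. The Codazzi equation for $\Sigma\subset S^{n+1}_1$ reduces to the symmetric form $\nabla_j A_{ni} = \nabla_n A_{ji}$, because de Sitter has constant sectional curvature $+1$ and so $\bar R(\partial_i,\partial_j,\partial_k,\hat n)=0$ (the curvature tensor is proportional to $g\owedge g$ and $\hat n$ is $g$-orthogonal to tangent vectors). Substituting this together with formula (\ref{p:1}) into $\nabla_j\nabla_i\tau = -g^{mn}(\nabla_j A_{ni})\nabla_m\eta - g^{mn}A_{ni}\nabla_j\nabla_m\eta$ gives the desired expression. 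The only bookkeeping risk is sign tracking in the nested Gauss formulas owing to the two distinct values of $\epsilon$, and verifying that the Codazzi equation remains symmetric despite a non-flat ambient of $\Sigma$; once those are settled the proof is three short computations.
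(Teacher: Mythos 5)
Your proof is correct and follows essentially the same route as the paper: iterate the Gauss formula through $\Sigma \subset S^{n+1}_1 \subset \mathbb{R}^{n+2}_1$ (with $h_{ij}=-g_{ij}$, $\epsilon_2=-1$) for (\ref{p:1}), invoke the Weingarten relation for (\ref{p:2}), then covariantly differentiate (\ref{p:2}) and substitute (\ref{p:1}) together with Codazzi for (\ref{p:3}). You also helpfully flag and fill a gap the paper leaves implicit: the paper's equation \eqref{eq:13} states the symmetric Codazzi identity only for a flat ambient, whereas in (\ref{p:3}) it is applied to $\Sigma\subset S^{n+1}_1$, and your observation that the de Sitter curvature tensor has the constant-curvature form (so $\bar R(\partial_i,\partial_j,\partial_k,\hat n)=0$) is exactly the needed justification.
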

             \begin{proof}

               Using the Weingarten equation (\ref{eq:7}) we obtain the second identity 
               \begin{equation*}
                 \begin{split}
             \label{eq:41}
             \nabla_j \tau = \langle \nabla_j \hat{n}, E_1\rangle
             &= -\langle A^{i}_{j}Y_i, E_1\rangle\\
             &= -g^{ik}A_{kj}\langle Y_i, E_1\rangle
             = -g^{ik}A_{kj}\nabla_i\langle Y, E_1\rangle
             = -g^{ik}A_{kj}\nabla_i\eta.
                 \end{split}
           \end{equation*}
           % From Gauss formula note that at any point $p\in S^{n+1}_1$  we have $h_{ij}N_p = -g_{ij}p$,    and this implies  $h_{ij}N\eta = -g_{ij}\eta$.
           The first of the identities follows using the Gauss formula applied twice \eqref{eq:Gausscod2}
           \begin{equation*} %% --- Better proof
             \begin{split}
             \nabla_{i}\nabla_{j}\eta & =  \langle E_1 , \nabla_i\nabla_j Y \rangle = \langle E_1 , -A_{ij}\hat{n} - g_{ij}Y \rangle  =-\tau A_{ij} - \eta g_{ij}.
             \end{split}
           \end{equation*}
           Finally, the third identity is obtained using the previous equation for the hessian of $\eta$ as follows
           \begin{equation*}
             \begin{split}
             \label{eq:43}
             \nabla_{j}\nabla_{i}\tau &= \nabla_{j}(-g^{mn}A_{ni}\nabla_{m}\eta)\\
             & = -\nabla_{j}g^{mn}A_{ni}\nabla_{m}\eta - g^{mn}\nabla_{j}A_{ni}\nabla_{m}\eta - g^{mn}A_{ni}\nabla_{mj}\eta\\
             & =- g^{mn}\nabla_{j}A_{ni}\nabla_{m}\eta - g^{mn}A_{ni}\nabla_{mj}\eta\\
             & =- g^{mn}\nabla_{n}A_{ij}\nabla_{m}\eta - g^{mn}A_{ni}(-\tau A_{mj} - \eta g_{mj})\\
             & =- g^{mn}\nabla_{n}A_{ij}\nabla_{m}\eta + \tau  A_{mj}g^{mn}A_{ni} +  g^{mn}A_{ni} \eta g_{mj}\\
             & =- g^{mn}\nabla_{n}A_{ij}\nabla_{m}\eta + \tau  A_{mj}g^{mn}A_{ni} +  A_{ij} \eta
           \end{split}
         \end{equation*}
               \end{proof}

               \begin{proof}[Proof of Theorem 1]
                 We will estimate $|H|$ and since $H^2 = |A|^2 + 2S_2$, we will get the desired estimate by admissibility. % Edit Add this line on why we are estimating H.
Since $\psi = \psi(Y,\tau)$ we first note that
\begin{equation}
\begin{split}
\nabla_k\nabla_l \psi &  = \nabla_k(\nabla_l^x \psi + \psi_{\tau}\nabla_l\tau)\\
& = \nabla_k^x\nabla_l^x\psi + \nabla_l^x\psi_{\tau}\nabla_k\tau  + \nabla_k^x\psi_{\tau}\nabla_l\tau + \psi_{\tau\tau}\nabla_{k}\tau\nabla_{l}\tau
+ \psi_{\tau}\nabla_{k}\nabla_{l}\tau,
\end{split}
\end{equation}
and also
\begin{equation}
\nabla_k^x\nabla_l^x\psi = D_k^x D_l^x\psi - (\nabla^x_{Y_{l}}Y_k )(\psi)  = -A_{kl}D^x_{\hat{n}} \psi - g_{kl}D^{x}_{X} \psi.
\end{equation}

Then, in an orthonormal frame such that $A$ is symmetric and proceeding as in \cite{Huang:01}, we have %% --- add more details
\begin{equation}
\sum_{k} \nabla_{k}\nabla_{k} \psi = \sum_{k} \nabla_{k}^{x}\nabla_{k}^{x} \psi + 2 \sum_{k}\nabla_k^x\psi_{\tau}\nabla_k\tau + \psi_{\tau\tau}\nabla_{k}\tau\nabla_{l}\tau +  \psi_{\tau} \sum_{k} \nabla_{k}\nabla_{k}\tau.
\end{equation}

From the assumption that $\psi$ is convex in $\tau$ and its regularity,  and Proposition~\hyperref[p:3]{\ref*{prop:1}} it follows
               \begin{equation}
            \begin{split}
            \label{eq:47}
            \sum_{k}\nabla_k\nabla_k \psi &\geq \psi_{\tau}\sum_{k}\nabla_{k}\nabla_{k}\tau + \psi_{\tau\tau}\sum_k\left(\nabla_k \tau\right)^2 - C_1H - C_2\\
            &\geq \psi_{\tau}\left(- \nabla_kH\nabla_k\eta + \tau A_{ki}A_{ki} + H\eta\right)  -C_1 H - C_2.
          \end{split}
        \end{equation}
Note that at the maximum of $H$ we have
        $\nabla H \dot{=} 0$ and $\nabla_{j}\nabla_i H \dot{\leq} 0$, then
        it follows $0\dot{\geq} F^{jj}\nabla_j\nabla_jH$. We continue from equation~(\ref{eq:36}) and using the last inequality
        (\ref{eq:47}), the concavity of $F$, the fact that $H\geq 0$
        and $\sum_jF^{jj}\geq 0$ we get
        \begin{equation*}
          \begin{split}
             \label{eq:48}
             0 & \geq
 % F^{jj}\nabla_{j}\nabla_{j}H & \geq
              \sum_{k}\nabla_{k} \nabla_k \psi
             +  H\left(F^{jj}A_{jj}^2 + \sum_j F^{jj}\right) - \psi\left( n+ \sum_{j} A_{jj}^2\right)\\
             & \geq
             \psi_{\tau}\left( \sum_{k} \tau A_{kk}^2 + H\eta\right)  -C_1 H - C_2
               +  HF^{jj}A_{jj}^2 - \psi\left( n+ \sum_{j} A_{jj}^2\right)\\
             & \geq -C_2  -n\psi + \left(\psi_{\tau}\eta  - C_1\right)H +  F^{jj}A_{jj}^2 H+ \left(\psi_{\tau}\tau - \psi\right)\sum_{k}A_{kk}^2.
           \end{split}
         \end{equation*}
         Since $(\psi_{\tau}\tau - \psi) \geq 0$, we can improve the last inequality by dropping the last term. Using the Newton-Maclaurin inequalities  $H_{k+1}H_{k-1} \leq H_{k}^{2}|$
         one can show (see \cite{Urbas:cespacelike}) the following
        \begin{equation*}
          \label{eq:49}
F^{ij}A_{il}A_{lj} \geq \frac{1}{n}S_{k}^{1/k}S_{1},
        \end{equation*}
and from this it follows that
\begin{equation*}
  \label{eq:50}
0 \geq -C_2  -n\psi + \left(\psi_{\tau}\eta  - C_1\right)H +  C_3\psi H^2
\end{equation*}
which implies $H$ is bounded, hence $A$ is bounded.
\end{proof}

\section{Proof of Theorem 2}
\label{sec:proof-theorem-2}

\begin{proof}

                 Consider the function $\gamma = \varphi - u$, $\gamma > 0$ in $\Omega$. Let %Edit add . Let
                 \begin{equation*}
                   \label{eq:54}
                   \Phi(\xi) = \ln(A_{11}) + \alpha(\tau) + \beta\ln(\gamma),
                 \end{equation*}
                 its first covariant derivative  % Edit Its.
                 \begin{equation}
                   \label{eq:55}
                   \nabla_j\Phi = \frac{\nabla_j A_{11}}{A_{11}} + \alpha'\nabla_{j}\tau + \beta\frac{\nabla_{j}\gamma}{\gamma}.
                 \end{equation}
                 The second covariant derivative is
                 \begin{multline*}
%                   \label{eq:56}
                   \nabla_{j}\nabla_{j}\Phi = \frac{\nabla_{j}\nabla_{j}A_{11}}{A_{11}} - \left(\frac{\nabla_{j}A_{11}}{A_{11}}\right)^2
                   + \alpha''\left(\nabla_{j}\tau\right)^2  \\ + \alpha'\nabla_{j}\nabla_{j}\tau + \beta\frac{\nabla_{j}\nabla_{j}\gamma}{\gamma}
                   - \beta\left(\frac{\nabla_{j}\gamma}{\gamma}\right)^2.
                 \end{multline*}
                 Using the commutator formula (\ref{eq:30}), we can replace the first term in the right hand side of the last equation,  and we also multiply the first derivatives of the
                 equation, to get an expression for  $F^{jj}\nabla_{j}\nabla_{j}\Phi$. Here, as usual, the notation indicates a sum over the repeated $j$ index. Thus we get
                 \begin{multline*}
%                   \label{eq:57}
                   F^{jj}\nabla_{j}\nabla_{j}\Phi 
                    = \frac{1}{A_{11}} \left\{ F^{jj}\nabla_{k}\nabla_{k}A_{jj} +F^{jj} A_{kk}A_{jj}^2 + F^{jj}h_{jk}h_{jk}A_{jj}\right.\\
\qquad  - F^{jj}h_{kk}h_{jj}A_{jj} - F^{jj}A_{jj}A_{kk}^2 + F^{jj}h_{jj}h_{kk}A_{kk}\\
\qquad \qquad \qquad\qquad\left. - F^{jj}h_{jk}h_{jk}A_{kk} \right\}
\                    - F^{jj}\left(\frac{\nabla_{j}A_{11}}{A_{11}}\right)^2  + \alpha''F^{jj}\left(\nabla_{j}\tau\right)^2 \\ + \alpha'F^{jj}\nabla_{j}\nabla_{j}\tau 
                    + \beta F^{jj}\frac{\nabla_{j}\nabla_{j}\gamma}{\gamma}- \beta F^{jj}\left(\frac{\nabla_{j}\gamma}{\gamma}\right)^2.
               \end{multline*}
               Note that in coordinates such that $h_{ij} = -\delta_{ij}$, some terms in the brackets % Edit -deltaij
               cancel. Now, using  the identity $F^{jj}A_{jj} = \psi$ from the homogeneity of \eqref{eq:3}, we can write
\begin{multline*}
%  \label{eq:58}
                   F^{jj}\nabla_{j}\nabla_{j}\Phi 
                   = \frac{1}{A_{11}} F^{jj}\nabla_{1}\nabla_{1}A_{jj} +F^{jj}A_{jj}^2 - \left( A_{11} + \frac{1}{A_{11}}\right)\psi \\
                   \qquad\qquad\qquad+ \sum_j F^{jj}  
                    - F^{jj}\left(\frac{\nabla_{j}A_{11}}{A_{11}}\right)^2  + \alpha''F^{jj}\left(\nabla_{j}\tau\right)^2 \\ + \alpha'F^{jj}\nabla_{j}\nabla_{j}\tau 
                    + \beta F^{jj}\frac{\nabla_{j}\nabla_{j}\gamma}{\gamma}- \beta F^{jj}\left(\frac{\nabla_{j}\gamma}{\gamma}\right)^2.
               \end{multline*}

Using  equation (\ref{eq:32}) in the last equation we get
\begin{multline}
  \label{eq:59}
  F^{jj}\nabla_{j}\nabla_{j}\Phi 
                    = -\frac{1}{A_{11}} F^{ij,kl}\nabla_{1}A_{ij}\nabla_{1}A_{kl} + \frac{\nabla_{1}\nabla_{1}\psi}{A_{11}} \\ - \left( A_{11} + \frac{1}{A_{11}}\right)\psi +F^{jj}A_{jj}^2+ \sum_j F^{jj}  \\
\qquad\qquad\qquad\qquad                    - F^{jj}\left(\frac{\nabla_{j}A_{11}}{A_{11}}\right)^2  + \alpha''F^{jj}\left(\nabla_{j}\tau\right)^2 + \alpha'F^{jj}\nabla_{j}\nabla_{j}\tau \\
                    + \beta F^{jj}\frac{\nabla_{j}\nabla_{j}\gamma}{\gamma}- \beta F^{jj}\left(\frac{\nabla_{j}\gamma}{\gamma}\right)^2.
               \end{multline}

                              Then as in \cite{Huang:01}, by Proposition~\hyperref[p:3]{\ref*{prop:1}.(3)} and  using  $\psi(Y,\tau)$ we have
               \begin{multline*}
                 \label{eq:60}
                 \nabla_{1}\nabla_{1}\psi \geq \psi_{\tau}\nabla_{1}\nabla_{1}\tau - C_{1} A_{11} - C_{2}\\
                =  \psi_{\tau}\left(-\sum_{r}\nabla_{r}A_{11}\nabla_{r}\eta + A_{11}^2\tau + A_{11}\delta_{11}\right) - C_{1} A_{11} - C_{2}.
              \end{multline*}
              Then we have the following inequality:
              \begin{equation}
                \label{eq:61}
                \frac{\nabla_{1}\nabla_{1}\psi}{A_{11}} \geq  -\frac{\psi_{\tau}}{A_{11}}\sum_{r}\nabla_{r}A_{11}\nabla_{r}\eta + \psi_{\tau}A_{11}\tau + \psi_{\tau}\delta_{11} - C_{1} - \frac{C_{2}}{A_{11}}.
              \end{equation}

On the other hand,  using the assumption that $\varphi$ is affine then
              \begin{equation}
                \label{eq:62}
                F^{jj}\nabla_{j}\nabla_{j}\gamma \geq -C.
              \end{equation}
Also we are assuming control over $|\nabla_{j}\gamma|\leq C$, and then
                            \begin{equation}
\label{eq:63}
F^{jj}\nabla_{j}\gamma \nabla_{j}\gamma \leq C\sum_{j}F^{jj},
\end{equation}
which will be used at the end.

If we now continue using inequalities (\ref{eq:62}) and (\ref{eq:61}) in (\ref{eq:59}) we obtain

              \begin{multline*} %% Typo fixed.
\label{eq:64}
                   F^{jj}\nabla_{j}\nabla_{j}\Phi 
                   \geq -\frac{1}{A_{11}} F^{ij,kl}\nabla_{1}A_{ij}\nabla_{1}A_{kl}
                   -\frac{\psi_{\tau}}{A_{11}}\sum_{r}\nabla_{r}A_{11}\nabla_{r}\eta\\ + \psi_{\tau}A_{11}\tau + \psi_{\tau}\delta_{11} - C_{1} - \frac{C_{2}}{A_{11}}
                   +F^{jj}A_{jj}^2 \\
                   \qquad\qquad\qquad- \left( A_{11}
                   + \frac{1}{A_{11}}\right)\psi + \sum_j F^{jj}  
                 - F^{jj}\left(\frac{\nabla_{j}A_{11}}{A_{11}}\right)^2\\
                   + \alpha''F^{jj}\left(\nabla_{j}\tau\right)^2
                   + \alpha'F^{jj}\nabla_{j}\nabla_{j}\tau 
                   - \beta \frac{C}{\gamma}- \beta F^{jj}\left(\frac{\nabla_{j}\gamma}{\gamma}\right)^2.
                 \end{multline*}
                 Using again Proposition~\hyperref[p:3]{\ref*{prop:1}-(3)}, we replace the term
                  $\alpha'F^{jj}\nabla_j\nabla_{j}\tau$ to get
\begin{multline*}
%                   \label{eq:65}
                   F^{jj}\nabla_{j}\nabla_{j}\Phi 
                   \geq -\frac{1}{A_{11}} F^{ij,kl}\nabla_{1}A_{ij}\nabla_{1}A_{kl}
                   -\frac{\psi_{\tau}}{A_{11}}\sum_{r}\nabla_{r}A_{11}\nabla_{r}\eta\\
                  \qquad + \psi_{\tau}A_{11}\tau + \left(\psi_{\tau} + \alpha'\psi\right)\delta_{11} - C_{1} - \frac{C_{2}}{A_{11}} + \sum_j F^{jj} \\
\qquad\qquad\qquad                 +(1 + \alpha'\tau)F^{jj}A_{jj}^2 - \left( A_{11}
                   + \frac{1}{A_{11}}\right)\psi                   - F^{jj}\left(\frac{\nabla_{j}A_{11}}{A_{11}}\right)^2 \\
                   + \alpha''F^{jj}\left(\nabla_{j}\tau\right)^2
                   - \alpha'\sum_r\nabla_{r}\psi\nabla_{r}\eta 
                   - \beta \frac{C}{\gamma}- \beta F^{jj}\left(\frac{\nabla_{j}\gamma}{\gamma}\right)^2.
                 \end{multline*}

        Now, at the maximum, we also have
               \begin{equation*}
                 \label{eq:66}
                 -\psi_{\tau}\sum_{r}\frac{\nabla_{r}A_{11}}{A_{11}}\nabla_{r}\eta = \psi_{\tau}\sum_{r}\left( \alpha'\nabla_{r}\tau + \beta\frac{\nabla_{r}\gamma}{\gamma}\right)\nabla_{r}\eta,
               \end{equation*}
               and since $\nabla_r\psi = \psi_r + \psi_{\tau}\nabla_r\tau$, we have that
               \begin{equation*}
                 \label{eq:67}
                 -\psi_{\tau}\sum_{r}\frac{\nabla_{r}A_{11}}{A_{11}}\nabla_{r}\eta
                 - \alpha'\sum_{r}\nabla_{r}\psi\nabla_{r}\eta =
                 \sum_{r}\left(\psi_{\tau} \beta\frac{\nabla_{r}\gamma}{\gamma}  - \alpha'\psi_r\right) \nabla_{r}\eta\geq -\frac{C\beta}{\gamma} - C, % EDIT missing \psi_\tau
               \end{equation*}
               then,
                                  \begin{multline}
                   \label{eq:68}
                   F^{jj}\nabla_{j}\nabla_{j}\Phi 
                   \geq -\frac{1}{A_{11}} F^{ij,kl}\nabla_{1}A_{ij}\nabla_{1}A_{kl}- C_{1} - \frac{C_2}{A_{11}}-2 \beta \frac{C}{\gamma}-C\\
\,\,                   + \left(\psi_{\tau}\tau -\psi\right)A_{11} + \left(\psi_{\tau} + \alpha'\psi\right)\delta_{11} \\
\qquad  +(1 + \alpha'\tau)F^{jj}A_{jj}^2 - \frac{\psi}{A_{11}} + \sum_j F^{jj}  \\
       - F^{jj}\left(\frac{\nabla_{j}A_{11}}{A_{11}}\right)^2
                   + \alpha''F^{jj}\left(\nabla_{j}\tau\right)^2
                   - \beta F^{jj}\left(\frac{\nabla_{j}\gamma}{\gamma}\right)^2.
                 \end{multline}

                 Case 1: There is a constant $\mu >0$ such that  %% EDIT start better redaction
                 \begin{equation*}
                 \label{eq:69}
                 A_{nn} \leq -\mu A_{11}.
               \end{equation*}

                 In this case we will use the concavity of $F$ and drop the term with the second
                 derivatives $F^{ij,kl}$ in the  inequality~\eqref{eq:68}. Note that the last equation implies that
               \begin{equation}
                 \label{eq:70}
                 F^{jj}A_{jj}^2 \geq \frac{\mu^2}{n}A_{11}^2 \sum_{j}F^{jj},
               \end{equation}
and also
               \begin{equation*}
                 \label{eq:71}
                 F^{nn}\geq \frac{1}{n}\sum_{j}F^{jj}.
               \end{equation*}
Note as well that                                             %% Finish better redaction.
              \begin{equation*}
                \label{eq:72}
                F^{jj}\left(\nabla_{j}\tau\right)^2 = F^{jj}A_{jj}^2(\nabla_{j}\eta)^2 \leq C F^{jj}A_{jj}^2.
              \end{equation*}
              At the maximum of $\Phi$ we have $\nabla_{j}\Phi = 0$ and from (\ref{eq:55}) we have
               \begin{equation}
                 \label{eq:73}
              \left( \frac{\nabla_{j}A_{11}}{A_{11}} \right)^2 = \left(\alpha'\nabla_{j}\tau + \beta\frac{\nabla_{j}\gamma}{\gamma}\right)^2,
               \end{equation}
               and moreover, for all $\epsilon >0$ we have
               \begin{equation}
                 \label{eq:74}
                 \left(\alpha'\nabla_{j}\tau + \beta\frac{\nabla_{j}\gamma}{\gamma}\right)^2 <
                 (1+\epsilon)(\alpha')^2(\nabla_{j}\tau)^2 +  (1+\epsilon^{-1})\beta^2\left(\frac{\nabla_{j}\gamma}{\gamma}\right)^2.
               \end{equation}
              Note that we will find below  an $\alpha$ such that $\left(\alpha'' - (1+\epsilon)(\alpha')^2\right) < 0$, so
              \begin{equation}
                \label{eq:75}
                \left(\alpha'' - (1+\epsilon)(\alpha')^2\right) F^{jj}\left(\nabla_{j}\tau\right)^2 \geq
                  C_1\left(\alpha'' - (1+\epsilon)(\alpha')^2\right)  F^{jj}A_{jj}^2,
              \end{equation}
then from \eqref{eq:68},
              \begin{multline}
                \label{eq:76}
                   F^{jj}\nabla_{j}\nabla_{j}\Phi                  
                   \geq  -2\beta\frac{C}{\gamma} - C  - C_{1} - \frac{C_{2}}{A_{11}} %Edit -2betaC/gamma
                   + (\psi_{\tau}\tau-\psi)A_{11} \\+ (\psi_{\tau}+\alpha'\psi)\delta_{11} 
                   +\left\{(1+\alpha'\tau)+  C_1\left(\alpha'' - (1+\epsilon)(\alpha')^2\right)\right\}F^{jj}A_{jj}^2 \\
                   - \frac{\psi}{A_{11}} + \left\{1 - \left(\beta +  (1+\epsilon^{-1})\beta^2\right)\frac{1}{\gamma^2}\right\}\sum_{j}F^{jj}.
              \end{multline}

              Now, in order to control the coefficients of $F^{jj}A_{jj}^2$, we solve the
              following ordinary equation
              \begin{equation*}
                \label{eq:77}
                \alpha'' - (\alpha')^2 = 0,
              \end{equation*}
              and we find solutions of the form
              \begin{equation*}
                \label{eq:78}
                \alpha = -\ln(\tau+a),
              \end{equation*}
              where $a>0$ to be specified. Moreover, the first and second derivatives are
            \begin{equation*}
\label{eq:79}
             \alpha' = -\frac{1}{\tau+a}, \quad \alpha'' = \frac{1}{(\tau+a)^2},
            \end{equation*}
            and then it is clear that
            \begin{equation*}
\label{eq:80}
              \alpha'' - (1 + \epsilon)(\alpha')^2  = - \frac{\epsilon}{(\tau+a)^2}\leq 0,
            \end{equation*}
           from which we can also see that for $\epsilon=a^2/2C_1$ we have  %Edit this
            \begin{multline*}
%\label{eq:81}
              (\alpha'\tau+1) + C_1(\alpha'' - (1+\epsilon)(\alpha')^2) = \frac{a}{\tau+a} - \frac{C_1\epsilon}{(\tau+a)^2}\\
=\frac{a(\tau+a)}{(\tau+a)^2} - \frac{C_1\epsilon}{(\tau+a)^2}> \frac{a^2}{2(\tau+a)^2}
              \geq C_3 >0 ,
            \end{multline*}
then form (\ref{eq:76}) we get
\begin{multline*}
  \label{eq:82}
0 \geq  -2\beta\frac{C}{\gamma} - C - C_{1} - \frac{C_{2}}{A_{11}}\\   %Edit Gorup constant 2betac/gamma
                   + (\psi_{\tau}\tau-\psi)A_{11} + (\psi_{\tau}+\alpha'\psi)\delta_{11} \\
                   +C_3F^{jj}A_{jj}^2 \\
                   - \frac{\psi}{A_{11}} + \left\{1 - \left(\beta +  (1+\epsilon^{-1})\beta^2\right)\frac{1}{\gamma^2}\right\}\sum_{j}F^{jj}.
               \end{multline*}

               Note $A_{11} \geq \cdots \geq A_{nn}$ and this implies that
               \begin{equation*}
                 \label{eq:83}
                 \sum_{j}F^{jj} = \frac{1}{\psi^{k-1}} H_{k-1},
               \end{equation*}
               from this it follows that
               \begin{equation*}
                 \label{eq:84}
                 \sum_{j}F^{jj} \geq C_4 > 0.
               \end{equation*}
               Using the growth assumption $\psi_{\tau}\tau-\psi >0$, the inequality \eqref{eq:70}, and choosing $\beta >0$ such that
               $\{1 - \left(\beta +  (1+\epsilon^{-1})\beta^2\right)\frac{1}{\gamma^2}\} >0$, we obtain
               \begin{equation*}
                 \label{eq:85}
                  0   \geq   -2\beta\frac{C}{\gamma} - C - C_{1} - \frac{C_{2}}{A_{11}}- \frac{\psi}{A_{11}}  +\frac{\mu^2}{n}C_3 A_{11}^2 .  %Edit Gorup constant 2betac/gamma
               \end{equation*}

Now we make use of the assumption $\lambda_1 \geq 1$ so that
               \begin{equation*}
                 \label{eq:86}
                 \frac{C(\beta)}{\mu} \geq \gamma A_{11}.
               \end{equation*}

               Case 2:  Looking back at inequality~\eqref{eq:68}, the assumption for this case is
               the existence of $\mu>0 $ such that 
               \begin{equation*}
                 \label{eq:87}
                 A_{nn} \geq -\mu A_{11},
               \end{equation*}
               and in this case we will make use of the term with $F^{ij,kl}$. Note also that $A_{jj}\geq -\mu A_{11}$, for all
               $j=1,2,\ldots,n$ since $A_{11}\geq A_{22}\geq \cdots \geq A_{nn}$.  % Change C dot to C dots

   Consider the following partition of the indices $\{ 1,2,\ldots,n\}$,
   \begin{equation*}
     \label{eq:88}
I = \{ j \,|\, F^{jj} \leq 4\,F^{11}\},\quad \mbox{and} \quad J = \{j \,|\, F^{jj}>4\,F^{11}\}.
   \end{equation*}

   Now, for $j\in I$, at the maximum, equation \eqref{eq:73} and inequality \eqref{eq:74} hold for any $\epsilon>0$, namely
   \begin{equation*}
     \label{eq:89}
                      \left(\alpha'\nabla_{j}\tau + \beta\frac{\nabla_{j}\gamma}{\gamma}\right)^2 <
                 (1+\epsilon)(\alpha')^2(\nabla_{j}\tau)^2 +  (1+\epsilon^{-1})\beta^2\left(\frac{\nabla_{j}\gamma}{\gamma}\right)^2, \quad j\in I.
   \end{equation*}

   For $j\in J$, at the maximum, since $\nabla_j\Phi =0$ in equation \eqref{eq:55},  we have for any $\epsilon >0$ that
   \begin{equation*}
     \label{eq:90}
     \beta^{-1} \left( \alpha'\nabla_{j} \tau + \frac{\nabla_{j} A_{11}}{A_{11}} \right)^2 \leq
     \frac{1+\epsilon}{\beta}(\alpha')^2(\nabla_{j}\tau)^2 + \frac{1+\epsilon^{-1}}{\beta}\left(\frac{\nabla_{j}A_{11}}{A_{11}}\right)^2.
   \end{equation*}

   From these two inequalities we can get
   \begin{equation*}   %% EDIT Braces and sums
     \begin{split}
     \label{eq:91}
     \beta F^{jj}\left(\frac{\nabla_{j}\gamma}{\gamma}\right)^2 + F^{jj}\left(\frac{\nabla_{j}A_{11}}{A_{11}}\right)^2 &\leq
      \frac{1+\epsilon}{\beta}(\alpha')^2\sum_{j\in J}F^{jj}(\nabla_{j}\tau)^2\\ &\quad + \frac{1+\epsilon^{-1}}{\beta}\sum_{j\in J}F^{jj}\left(\frac{\nabla_{j}A_{11}}{A_{11}}\right)^2\\
&\quad + \beta\sum_{j\in I}F^{jj}\left(\frac{\nabla_{j}\gamma}{\gamma}\right)^2      + \sum_{j\in J} F^{jj}\left( \frac{\nabla_{j} A_{11}}{A_{11}}\right)^2\\
&\quad + (1+\epsilon)(\alpha')^2\sum_{j\in I}F^{jj}(\nabla_{j}\tau)^2\\
&\quad+  (1+\epsilon^{-1})\beta^2\sum_{j\in I}F^{jj}\left(\frac{\nabla_{j}\gamma}{\gamma}\right)^2\\
& \leq
4n\{ \beta + (1 + \epsilon^{-1})\beta^2\} F^{11}\left(\frac{\nabla_{j}\gamma}{\gamma}\right)^2\\
&\quad+ (1+\epsilon)(1+\beta^{-1})(\alpha')^2F^{jj}(\nabla_{j}\tau)^2\\
& \quad    + \{ 1 + (1+\epsilon^{-1})\beta^{-1}\}\sum_{j\in J}F^{jj}\left(\frac{\nabla_{j}A_{11}}{A_{11}}\right)^2.
   \end{split}
 \end{equation*}

%% ----------- Different format of same equaiton
%  \begin{multline*}   %% EDIT Braces and sums
%      \beta F^{jj}\left(\frac{\nabla_{j}\gamma}{\gamma}\right)^2 + F^{jj}\left(\frac{\nabla_{j}A_{11}}{A_{11}}\right)^2 
%      \leq 
%       \frac{1+\epsilon}{\beta}(\alpha')^2\sum_{j\in J}F^{jj}(\nabla_{j}\tau)^2 \\
%        + \frac{1+\epsilon^{-1}}{\beta}\sum_{j\in J}F^{jj}\left(\frac{\nabla_{j}A_{11}}{A_{11}}\right)^2
%  + \beta\sum_{j\in I}F^{jj}\left(\frac{\nabla_{j}\gamma}{\gamma}\right)^2     
%  + \sum_{j\in J} F^{jj}\left( \frac{\nabla_{j} A_{11}}{A_{11}}\right)^2\\
%  + (1+\epsilon)(\alpha')^2\sum_{j\in I}F^{jj}(\nabla_{j}\tau)^2
% +  (1+\epsilon^{-1})\beta^2\sum_{j\in I}F^{jj}\left(\frac{\nabla_{j}\gamma}{\gamma}\right)^2\\
% \qquad \qquad \qquad \qquad \qquad \quad\leq 
% 4n\{ \beta + (1 + \epsilon^{-1})\beta^2\} F^{11}\left(\frac{\nabla_{j}\gamma}{\gamma}\right)^2 \\
% + (1+\epsilon)(1+\beta^{-1})(\alpha')^2F^{jj}(\nabla_{j}\tau)^2\\
%     + \{ 1 + (1+\epsilon^{-1})\beta^{-1}\}\sum_{j\in J}F^{jj}\left(\frac{\nabla_{j}A_{11}}{A_{11}}\right)^2.
%  \end{multline*}

   Using the last two estimates in \eqref{eq:68} at the maximum we obtain
   \begin{multline*}
%     \label{eq:92}
                   0\geq -\frac{1}{A_{11}} F^{ij,kl}\nabla_{1}A_{ij}\nabla_{1}A_{kl}- C_{1} - \frac{C_2}{A_{11}}-2 \beta \frac{C}{\gamma}-C\\
                   + \left(\psi_{\tau}\tau -\psi\right)A_{11} + \left(\psi_{\tau} + \alpha'\psi\right)\delta_{11} \\
\qquad                   +(1 + \alpha'\tau)F^{jj}A_{jj}^2 - \frac{\psi}{A_{11}} + \sum_j F^{jj}  \\
                   - F^{jj}\left(\frac{\nabla_{j}A_{11}}{A_{11}}\right)^2
                   + \alpha''F^{jj}\left(\nabla_{j}\tau\right)^2
                   - \beta F^{jj}\left(\frac{\nabla_{j}\gamma}{\gamma}\right)^2.
                 \end{multline*}
Solving $\alpha'' - (\alpha')^2 =0$ as in Case 1, we obtain \eqref{eq:75}, then 
                 \begin{multline*}
%                   \label{eq:93}
                   0\geq -\frac{1}{A_{11}} F^{ij,kl}\nabla_{1}A_{ij}\nabla_{1}A_{kl}- C_{1} - \frac{C_2}{A_{11}}-2 \beta \frac{C}{\gamma}-C\\
                   + \left(\psi_{\tau}\tau -\psi\right)A_{11} + \left(\psi_{\tau} + \alpha'\psi\right)\delta_{11} - \frac{\psi}{A_{11}}  \\
\qquad- 4n\{ \beta+(1+\epsilon^{-1})\beta^2\}F^{11}\left(\frac{\nabla_{j}\gamma}{\gamma}\right)^2 + \sum_j F^{jj}\\
 \qquad\qquad\qquad +\{(1 + \alpha'\tau) + C_1\left( \alpha'' - (1+\epsilon)(1+\beta^{-1})(\alpha')^2\right)\}F^{jj}A_{jj}^2  \\
     - \{ 1 + (1+\epsilon^{-1})\beta^{-1}\}\sum_{j\in J}F^{jj}\left(\frac{\nabla_{j}A_{11}}{A_{11}}\right)^2,
   \end{multline*}
   and moreover, for $\epsilon = \epsilon(a)$, there is a $C_0>0$ such that the last term
   is improved by
   \begin{multline}
     \label{eq:94}
     %               0\geq -\frac{1}{A_{11}} F^{ij,kl}\nabla_{1}A_{ij}\nabla_{1}A_{kl}- C_{1} - \frac{C_2}{A_{11}}-2 \beta \frac{C}{\gamma}-C\\
     %               + \left(\psi_{\tau}\tau -\psi\right)A_{11} + \left(\psi_{\tau} + \alpha'\psi\right)\delta_{11} \\
     %               +\{(1 + \alpha'\tau) + C_1\left( \alpha'' - (1+\epsilon)(1+\beta^{-1})(\alpha')^2\right)\}F^{jj}A_{jj}^2 - \frac{\psi}{A_{11}} + \sum_j F^{jj}  \\
     %      - 4n\{ \beta + (1 + \epsilon^{-1})\beta^2\} F^{11}\left(\frac{\nabla_{j}\gamma}{\gamma}\right)^2 \\
     % - \left( 1 + C_0\beta^{-1}\right)\sum_{j\in J}F^{jj}\left(\frac{\nabla_{j}A_{11}}{A_{11}}\right)^2,
                   0\geq -\frac{1}{A_{11}} F^{ij,kl}\nabla_{1}A_{ij}\nabla_{1}A_{kl}- C_{1} - \frac{C_2}{A_{11}}-2 \beta \frac{C}{\gamma}-C\\
                   + \left(\psi_{\tau}\tau -\psi\right)A_{11} + \left(\psi_{\tau} + \alpha'\psi\right)\delta_{11} - \frac{\psi}{A_{11}}  \\
\qquad- 4n\{ \beta+(1+\epsilon^{-1})\beta^2\}F^{11}\left(\frac{\nabla_{j}\gamma}{\gamma}\right)^2 + \sum_j F^{jj}\\
 \qquad\qquad\qquad +\{(1 + \alpha'\tau) + C_1\left( \alpha'' - (1+\epsilon)(1+\beta^{-1})(\alpha')^2\right)\}F^{jj}A_{jj}^2  \\
     - \{ 1 + C_0\beta^{-1}\}\sum_{j\in J}F^{jj}\left(\frac{\nabla_{j}A_{11}}{A_{11}}\right)^2.
   \end{multline}
   
               % ----- Ben Andrews Inequality.

   It is also known (see for instance Lemma 2.20 and Lemma 2.21 in \cite{Andrews1994}) that for any symmetric matrix $\eta_{ij}$ we have
   \begin{equation*}
\label{eq:95}
     F^{ij,kl}\eta_{ij}\eta_{kl} = \frac{\partial^2 f}{\partial \lambda_i\partial\lambda_j}\eta_{ii}\eta_{jj} +
     \sum_{i\neq j}\frac{f_i - f_j}{\lambda_i-\lambda_j}\eta_{ij}^2,
   \end{equation*}
   and whenever $F$ is concave, then the second term of the right hand side of the equation is non-positive and it
   should be read as a limit when $\lambda_i = \lambda_j$.
% ----
   Then, using this Lemma, the Codazzi equation \eqref{eq:13} and since $1\notin J$ we have the following inequality
   \begin{multline*}
%     \label{eq:96}
     -\frac{1}{\lambda_1}F^{ij,kl}\nabla_{1}A_{ij}\nabla_1A_{kl} \geq
     -\frac{2}{\lambda_1}\sum_{j\in J}\frac{f_{1}-f_{j}}{\lambda_1 - \lambda_j}|\nabla_1A_{1j}|^2\\ =
     -\frac{2}{\lambda_1}\sum_{j\in J}\frac{f_{1}-f_{j}}{\lambda_1 - \lambda_j}|\nabla_jA_{11}|^2.
   \end{multline*}
Then following from \eqref{eq:94} we get
   \begin{multline}
     \label{eq:97}
                        0\geq - C_{1} - \frac{C_2}{A_{11}}-2 \beta \frac{C}{\gamma}-C
                   + \left(\psi_{\tau}\tau -\psi\right)A_{11} + \left(\psi_{\tau} + \alpha'\psi\right)\delta_{11} \\
\qquad\quad+ C_{3}F^{jj}A_{jj}^2 - \frac{\psi}{A_{11}} + \sum_j F^{jj}  
          - 4n\{ \beta + (1 + \epsilon^{-1})\beta^2\} F^{11}\left(\frac{\nabla_{j}\gamma}{\gamma}\right)^2 \\
          - \left( 1 + C_0\beta^{-1}\right)\sum_{j\in J}F^{jj}\left(\frac{\nabla_{j}A_{11}}{A_{11}}\right)^2
               -\frac{2}{\lambda_1}\sum_{j\in J}\frac{f_{1}-f_{j}}{\lambda_1 - \lambda_j}|\nabla_jA_{11}|^2.
   \end{multline}

   Put $\delta = C_0 \beta^{-1}$, and recall that since $j \in J$ we have $f_j > 4f_1$. If $\lambda_j > 0$ then the equation
   \begin{equation}
     \label{eq:98}
     (1-\delta) f_j \lambda_1 \geq 2f_1\lambda_1 - (1+\delta) f_j\lambda_j, \quad \mbox{for }\quad j \in J,
   \end{equation}
   holds with $\delta = \frac{1}{4}$. If $\lambda_j \leq 0$, then since $\lambda_n \geq -\mu \lambda_1$ and
   thus $\lambda_j \geq -\mu \lambda_1$ for all $j = 1,2,\ldots,n$, then we have $|\lambda_j|\leq\mu\lambda_1$.
   This implies that \eqref{eq:98} is also satisfied if $\delta = 1/4 $ and $\mu = 1/5$. Recall that this
   choices implies a value for $\beta$ which depends on $\sup_{\Omega}|\tilde{\nabla} u|$.

   Equation \eqref{eq:98} implies the inequality
   \begin{equation*}
     \label{eq:99}
     -\frac{2}{\lambda_1}\frac{f_1-f_j}{\lambda_1-\lambda_j} \geq (1+C_0\beta^{-1})\frac{f_j}{\lambda_1^2}, \quad j\in J,
   \end{equation*}
   for $\beta$ sufficiently small, and then we can drop the last two terms in \eqref{eq:97}

   \begin{multline*}
%     \label{eq:100}
                             0\geq - C_{1} - \frac{C_2}{A_{11}}-2 \beta \frac{C}{\gamma}-C\\
                   + \left(\psi_{\tau}\tau -\psi\right)A_{11} + \left(\psi_{\tau} + \alpha'\psi\right)\delta_{11} 
+ C_{3}F^{jj}A_{jj}^2 \\- \frac{\psi}{A_{11}} + \sum_j F^{jj}  
          - 4n\{ \beta + (1 + \epsilon^{-1})\beta^2\} F^{11}\left(\frac{\nabla_{j}\gamma}{\gamma}\right)^2
   \end{multline*}
Now, recall from \eqref{eq:63} we get
   \begin{multline*}
     \label{eq:101}
                             0\geq - C_{1} - \frac{C_2}{A_{11}}-2 \beta \frac{C}{\gamma}-C
                   + \left(\psi_{\tau}\tau -\psi\right)A_{11} + \left(\psi_{\tau} + \alpha'\psi\right)\delta_{11} \\
\qquad+ C_{3}F^{jj}A_{jj}^2 - \frac{\psi}{A_{11}} + \sum_{j}F^{jj}  
          - 4n\{ \beta + (1 + \epsilon^{-1})\beta^2\} C \frac{F^{11}}{\gamma^2}, \\
   \end{multline*}
   which gives us at the end an estimate of the type
   \begin{equation*}
     \label{eq:102}
     C_4\lambda_1  + C_3 F^{11}\lambda_1^2 \leq C\left(1 + \frac{1}{\gamma}+\frac{F^{11}}{\gamma^2}\right),
   \end{equation*}
which concludes the proof the theorem.

\end{proof}

%-------- The Bibliography

\end{document}